\title[Counting hyperbolic manifolds and a bound on torsion]{Counting non-commensurable hyperbolic manifolds and a bound on homological torsion}
\author{Bram Petri}
\date{\today}
\address{Mathematisches Institut der Universit\"at Bonn, Germany}
\newtheorem{thmnew}{Theorem}
\newtheorem{cornew}{Corollary}
\newtheorem{thm}{Theorem}[section]
\newtheorem{prp}[thm]{Proposition}
\newtheorem{lem}[thm]{Lemma}
\newtheorem{con}[thm]{Conjecture}
\newenvironment{thmrep}[1]
  {\innerthmrep}
  {\endinnerthmrep}
\newenvironment{correp}[1]
  {\innercorrep}
  {\endinnercorrep}
\theoremstyle{definition}
\newtheorem{dff}[thm]{Definition}
\newcommand{\nc}{\newcommand}
\nc{\dmo}{\DeclareMathOperator}
\nc{\abs}[1]{\left| #1 \right|}
\nc{\bigO}[1]{\mathcal{O}\left(#1\right)}
\nc{\card}[1]{\left|#1\right|}
\nc{\ceil}[1]{\left\lceil #1 \right\rceil}
\nc{\CC}{\mathbb{C}}
\nc{\floor}[1]{\left\lfloor #1 \right\rfloor}
\dmo{\Id}{Id}
\nc{\ZZ}{\mathbb{Z}}
\nc{\len}[1]{\left| #1 \right|}
\nc{\littleo}[1]{o\left(#1\right)}
\dmo{\Mat}{Mat}
\nc{\NN}{\mathbb{N}}
\nc{\norm}[1]{\left|\left| #1 \right|\right|}
\nc{\QQ}{\mathbb{Q}}
\nc{\RR}{\mathbb{R}}
\nc{\st}[2]{\left\{ #1 ;\; #2\right\}}
\dmo{\supp}{supp}
\nc{\tr}[1]{\mathrm{tr}\left(#1\right)}
\dmo{\area}{area}
\dmo{\conv}{conv}
\dmo{\diam}{diam}
\dmo{\dist}{\mathrm{d}}
\nc{\HH}{\mathbb{H}}
\dmo{\inj}{inj}
\dmo{\Isom}{Isom}
\dmo{\MCG}{MCG}
\dmo{\MPL}{MPL}
\dmo{\Mod}{\mathcal{M}}
\dmo{\PL}{PL}
\nc{\Sphere}{\mathbb{S}}
\dmo{\sys}{sys}
\dmo{\Teich}{\mathcal{T}}
\nc{\Torus}{\mathbb{T}}
\dmo{\vol}{vol}
\dmo{\WP}{WP}
\dmo{\convTV}{\;\stackrel{\mathrm{TV}}{\longrightarrow}\;}
\nc{\ExV}[2]{\mathbb{E}_{#1}\left[#2\right]}
\dmo{\EE}{\mathbb{E}}
\nc{\Pro}[2]{\mathbb{P}_{#1}\left[#2\right]}
\dmo{\PP}{\mathbb{P}}
\nc{\distTV}[2]{\mathrm{d}_{\rm TV}\left(#1,#2\right)}
\dmo{\UU}{\mathbb{U}}
\nc{\Var}[2]{\mathbb{V}\mathrm{ar}_{#1}\left[#2\right]}
\dmo{\alt}{\mathfrak{A}}
\dmo{\Aut}{Aut}
\dmo{\Cay}{Cay}
\dmo{\Fix}{Fix}
\dmo{\FF}{\mathbb{F}}
\dmo{\GL}{GL}
\dmo{\Hom}{Hom}
\dmo{\KK}{\mathbb{K}}
\dmo{\PSL}{PSL}
\dmo{\quat}{\mathcal{H}}
\dmo{\Rep}{Rep}
\dmo{\sym}{\mathfrak{S}}
\dmo{\SL}{SL}
\begin{document}

\begin{abstract} 
We prove that the cardinality of  the torsion subgroups in homology of a closed hyperbolic manifold of any dimension can be bounded by a doubly exponential function of its diameter. It would follow from a conjecture by Bergeron and Venkatesh that the order of growth in our bound is sharp. 

We also determine how the number of non-commensurable closed hyperbolic manifolds of dimension at least $3$ and bounded diameter grows. The lower bound implies that the fraction of arithmetic manifolds tends to zero as the diameter goes up.
\end{abstract}

\maketitle

\section{Introduction}

Recently there has been a lot of progress on understanding the relation between the volume of a negatively curved manifold and its toplogical complexity. In this note, we will instead consider the relation between complexity and diameter. We will restrict to closed hyperbolic  (constant sectional curvature $-1$) manifolds. The main upshot of considering the diameter instead of the volume is that we obtain bounds in dimension $3$.

\subsection{New results}
Recall that two manifolds are called commensurable if they have a common finite cover. The diameter of a commensurability class of manifolds is the minimal diameter realized by a manifold in that class. Given $d\in\RR_+$ and $n\geq 3$, let $NC^{\diam}_n(d)$ denote the number of commensurability classes of closed hyperbolic $n$-manifolds of diameter $\leq d$.  We will prove
\begin{thmnew}\label{thm_diam} For all $n \geq 3$ there exist $0<a<b$ so that
\[a\cdot d \leq \log(\log(NC^{\diam}_n(d))) \leq b \cdot d\]
for all $d\in\RR_+$ large enough.
\end{thmnew}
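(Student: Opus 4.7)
The plan is to prove the two bounds separately. For the upper bound, I would bound the number of closed hyperbolic $n$-manifolds of diameter $\leq d$ up to isometry, since commensurability is coarser than isometry. First, $\diam(M)\leq d$ forces $\vol(M)\leq C_n e^{(n-1)d}$, as $M$ is contained in any ball of radius $d$. A standard packing argument in the Margulis-thick part then yields a presentation of $\pi_1(M)$ with $N\leq O(e^{(n-1)d})$ generators represented by loops of length $O(d)$ and relators of length bounded in terms of $d$. By Mostow rigidity (using $n\geq 3$), the isometry class of $M$ is determined by $\pi_1(M)$, and a crude count of presentations with these bounds shows there are at most $\exp(\exp(Cd))$ of them, for some $C=C(n)$. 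This yields the upper bound on $NC^{\diam}_n(d)$.

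For the lower bound I would construct doubly-exponentially many pairwise non-commensurable manifolds of diameter $\leq d$ by gluing copies of a fixed right-angled building block. Fix a right-angled hyperbolic polytope $P\subset\HH^n$, or in dimensions where this is not directly available, a compact hyperbolic $n$-manifold with totally geodesic boundary whose facets meet at right angles. I would then consider the closed manifolds obtained by identifying the facets of $N$ copies of $P$ according to a combinatorial pattern whose dual graph has diameter $O(\log N)$, for instance a uniformly random matching of the facets, which achieves logarithmic diameter with high probability. The right-angled condition ensures the result is a genuine closed hyperbolic $n$-manifold, of volume $O(N)$ and diameter $O(\log N)$. Taking $N\sim e^{cd}$ produces roughly $\exp(\exp(cd))$ such manifolds of diameter $\leq d$.

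The main obstacle is showing this family contains doubly-exponentially many distinct \emph{commensurability} classes. I would argue that a generic gluing is non-arithmetic --- either by imposing a Gromov--Piatetski-Shapiro style asymmetry on sufficiently many facets, or by a direct count showing that arithmeticity cuts out a thin subfamily of gluing patterns. For non-arithmetic $M$, Margulis' commensurator theorem implies that the commensurator of $\pi_1(M)$ is itself a lattice, so the number of manifolds in the commensurability class of $M$ of volume $\leq V$ is at most subexponential in $V$ (by subgroup growth of lattices in rank-$1$ Lie groups). Since every manifold in our family has volume $\leq e^{(n-1)d}$, collapsing by commensurability reduces the double exponential only by a lower-order factor, leaving at least $\exp(\exp((c-o(1))d))$ distinct classes. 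The delicate point is the uniform non-arithmeticity across random gluings; in dimension $3$ it may be cleaner to separate classes directly by comparing arithmetic invariants (e.g.\ trace fields or quaternion algebras) of different gluings rather than relying on a single non-arithmeticity argument.
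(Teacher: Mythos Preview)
Your upper bound is essentially Young's argument, which the paper simply cites; that part is fine. The lower bound, however, has a genuine gap in the commensurability step. You claim that for a non-arithmetic $M$ the number of manifolds in its commensurability class of volume $\leq V$ is ``subexponential in $V$ (by subgroup growth of lattices in rank-$1$ Lie groups)''. This is false: lattices in $\Isom^+(\HH^n)$ are rank-$1$ and typically \emph{large} (they virtually surject onto a nonabelian free group --- certainly for $n=2,3$, and for many arithmetic lattices in higher dimension), so their subgroup growth is of factorial type $m^{\Theta(m)}$, not subexponential. Concretely, the commensurator $\Gamma^*$ can have on the order of $\exp(c\,V\log V)$ sublattices of covolume $\leq V$, which is the \emph{same} order of magnitude as your total count of gluings of $N\sim V$ blocks. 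Whether anything survives the collapse then depends on a comparison of implicit constants you have not controlled. There are also two structural problems with the construction itself: compact right-angled hyperbolic polytopes do not exist for $n\geq 5$ (Vinberg), and a uniformly random matching of facets of right-angled blocks will almost never satisfy the codimension-$2$ angle conditions needed to produce a manifold, so ``random matching'' does not model closed hyperbolic $n$-manifolds.

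The paper avoids all of this by using the Gelander--Levit building blocks $V_0,V_1,A_\pm,B_\pm$ glued along Schreier graphs of finite-index subgroups $H<\FF_2$: their Proposition (stated here as Proposition~\ref{prp_noncomm}) gives non-commensurability \emph{directly} whenever the underlying subgroups differ, so no collapsing estimate is needed at all. The diameter control comes from the Bollob\'as--Fernandez de la Vega bound on random $4$-regular graphs (applied to Schreier graphs), and the count comes from $a_N(\FF_2)\sim N\cdot N!$. If you want to salvage your approach, the honest fix is not a sharper subgroup-growth bound but rather an \emph{a priori} invariant that distinguishes commensurability classes of your gluings --- which is exactly what the Gelander--Levit interbreeding of six pairwise non-commensurable arithmetic pieces provides.
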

Note that the analogous statement in dimension $2$ is false. Since surfaces have large deformation spaces of hyperbolic metrics (see eg. \cite{Bus} for details), it is not hard to produce an uncountable number of non-commensurable hyperbolic surfaces that have both bounded diameter and bounded volume.

Our upper bound follows directly from a result by Young \cite{You} (see also Equation \eqref{eq_diamcount}) that estimates the number of manifolds up to a given diameter. However, for his lower bound, Young uses finite covers of a fixed manifold, which are all commensurable. We will instead consider a collection of non-commensurable manifolds constructed by Gelander and Levit \cite{GelLev} and will use results on random graphs due to Bollob\'as and Fernandez de la Vega \cite{BolFer} to argue that most of these manifolds have small diameters. 

Because $NC^{\diam}_n(d)$ is finite for all $n\geq 3$ and $d\in \RR_+$, we can turn the set of commensurability classes of closed hyperbolic $n$-manifolds of diameter $\leq d$ into a probability space by equipping it with the uniform probability measure. That is, given $n\geq 3$ and $d\in \RR_+$ and a set $A$ of commensurability classes of closed hyperbolic manifolds of diameter $\leq d$, we set
\[\PP_{n,d}[A] = \frac{\card{A}}{NC^{\diam}_n(d)},\]
where $\card{A}$ denotes the cardinality of $A$. 

It follows from Theorem \ref{thm_diam} together with results by Belolipetsky \cite{Bel} for $n\geq 4$ and Belolipetsky, Gelander, Lubotzky and Shalev \cite{BelGelLubSha} for $n=3$ that most maximal lattices are not arithmetic:

\begin{cornew}\label{cor_arithm} Let $n\geq 3$. We have
\[\lim_{d\to\infty}\PP_{n,d}[\text{The manifold is arithmetic}] = 0.\]
\end{cornew}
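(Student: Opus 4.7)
The goal is to show that the number $N^{\mathrm{ar}}_n(d)$ of arithmetic commensurability classes represented by some closed hyperbolic $n$-manifold of diameter at most $d$ is negligible compared to $NC^{\diam}_n(d)$ as $d\to\infty$. By Theorem~\ref{thm_diam} the denominator satisfies $NC^{\diam}_n(d) \geq \exp(\exp(ad))$ for some $a>0$ and all large $d$, so it will suffice to exhibit a bound $N^{\mathrm{ar}}_n(d) \leq \exp(O(d^2))$.

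First, I would convert the diameter bound into a covolume bound for a maximal arithmetic lattice. If $M=\HH^n/\Lambda$ is closed of diameter $\leq d$, then $M$ is the image of a hyperbolic ball of radius $d$ under the universal cover, so
\[
\vol(M)\;\leq\;\vol_{\HH^n}\bigl(B(d)\bigr)\;=\;O\bigl(e^{(n-1)d}\bigr).
\]
If $\Gamma\supset\Lambda$ is the (essentially unique) maximal arithmetic lattice in the commensurability class, then $\mathrm{covol}(\Gamma)\leq\vol(M)$, so $\mathrm{covol}(\Gamma)$ is also at most $O(e^{(n-1)d})$.

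Next, I would invoke the cited counting theorems for maximal arithmetic lattices: Belolipetsky \cite{Bel} for $n\geq 4$ and Belolipetsky--Gelander--Lubotzky--Shalev \cite{BelGelLubSha} for $n=3$. Each bounds the number of maximal arithmetic lattices of covolume at most $V$ in $\Isom(\HH^n)$ by $\exp\bigl(c(n)(\log V)^2\bigr)$. Substituting $V=O(e^{(n-1)d})$ yields $N^{\mathrm{ar}}_n(d)\leq \exp(Cd^2)$ for a suitable constant $C=C(n)$.

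Combining with the doubly exponential lower bound of Theorem~\ref{thm_diam} gives
\[
\PP_{n,d}[\text{arithmetic}] \;\leq\; \frac{\exp(Cd^2)}{\exp(\exp(ad))}\;\longrightarrow\;0.
\]
The only point requiring care is the translation between lattice-counting (at fixed covolume), as in the cited works, and commensurability-class-counting (at fixed diameter), as in the corollary: this is handled by the volume-diameter comparison above, together with the fact that arithmeticity is a commensurability invariant and that each commensurability class contains at most finitely many maximal representatives. The sub-exponential-in-$V$ shape of the arithmetic count is essential: a merely exponential-in-$V$ bound would compete with the denominator at the same doubly exponential scale.
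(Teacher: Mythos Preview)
Your proposal is correct and follows essentially the same line as the paper: pass from a diameter bound to a covolume bound via Lemma~\ref{lem_diamvol}, invoke the Belolipetsky/BGLS count of maximal arithmetic lattices (stated in the paper as Theorem~\ref{thm_mal}, giving $\exp(\beta' d^{1+\varepsilon})$ rather than your $\exp(Cd^2)$, though either suffices), and compare with the doubly exponential lower bound of Theorem~\ref{thm_diam}. One cosmetic point: the maximal arithmetic lattice containing $\Lambda$ need not be ``essentially unique'' in the commensurability class, but you only use the existence of one such $\Gamma\supset\Lambda$, which is what your covolume inequality actually requires.
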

Similar results have been proved in dimension $\geq 4$ by Gelander and Levit \cite{GelLev} with diameter replaced by volume and by Masai \cite{Mas} for a different model of random $3$-manifolds: random $3$-dimensional mapping tori built out of punctured surfaces.

We show that for closed hyperbolic manifolds, the size of homological torsion can also be bounded in terms of the diameter of the manifold.
\begin{thmnew}\label{thm_torsion} For every $n\geq 2$ there exists a constant $C >0$ so that 
\[\log\log\left(\card{H_i(M,\ZZ)_{\mathrm{tors}}}\right) \leq C\cdot \diam(M) \]
for all $i=0,\ldots,n$ for any closed hyperbolic $n$-manifold $M$.
\end{thmnew}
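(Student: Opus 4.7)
The plan is to combine a geometric control of the combinatorial complexity of $M$ with an elementary algebraic bound on torsion. First, since $M$ is complete, the exponential map based at any point $p\in M$ is a surjection from the closed ball $B(0,\diam(M))\subset\HH^n$ onto $M$. Consequently
\[\vol(M)\;\leq\;\vol_{\HH^n}(B(0,\diam(M)))\;\leq\;a_n\cdot e^{(n-1)\diam(M)}\]
for a constant $a_n$ depending only on $n$.

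Next, I would invoke a triangulation theorem for complete finite-volume hyperbolic manifolds (as proved by Gelander and refined in subsequent work): there exists a simplicial complex $K$ homotopy equivalent to $M$ with at most $b_n\cdot\vol(M)$ simplices, all of uniformly bounded combinatorial degree. Combined with the volume bound, $K$ has at most $c_n \cdot e^{(n-1)\diam(M)}$ simplices in each dimension.

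The algebraic input would then be standard. The simplicial boundary operator $\partial_{i+1}$ has entries in $\{-1,0,+1\}$ with $i+2\leq n+1$ nonzero entries per column, and the order of the torsion subgroup of $H_i(K,\ZZ)$ divides the $r$-th determinantal divisor of $\partial_{i+1}$, where $r=\mathrm{rank}(\partial_{i+1})$; in particular it is bounded by the absolute value of any nondegenerate $r\times r$ minor of $\partial_{i+1}$. Hadamard's inequality applied column by column gives that any such minor has absolute value at most $(n+1)^{r/2}$, so
\[\card{H_i(M,\ZZ)_{\mathrm{tors}}}\;\leq\;(n+1)^{N/2}\;\leq\;\exp\!\left(C_n\cdot e^{(n-1)\diam(M)}\right),\]
where $N$ is the number of $(i+1)$-simplices. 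Taking $\log\log$ and absorbing any additive constants using the universal positive lower bound on the diameter of a closed hyperbolic $n$-manifold yields the desired estimate.

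I expect the main obstacle to be the second step: producing a triangulation whose simplex count is linearly controlled by $\vol(M)$ even when $M$ contains arbitrarily thin Margulis tubes around short closed geodesics. If $\inj(M)$ were bounded below, the nerve of a maximal $\varepsilon$-net at scale comparable to $\inj(M)$ would give such a triangulation directly; in general, the thin parts need to be triangulated efficiently, which is precisely what the Gelander-style machinery achieves and which I would apply as a black box rather than reprove.
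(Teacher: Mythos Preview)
Your argument is fine in dimension $n\geq 4$, but it breaks down precisely in dimension $3$, which is the main case of interest. The black box you invoke in the second step --- a simplicial model of $M$ with at most $b_n\cdot\vol(M)$ cells and bounded local degree --- does not exist for closed hyperbolic $3$-manifolds. Indeed, such a model would immediately give $\log\card{H_1(M,\ZZ)_{\mathrm{tors}}}\leq C\cdot\vol(M)$, and this is known to be false: by Thurston's hyperbolic Dehn surgery there are infinitely many closed hyperbolic $3$-manifolds of bounded volume, and Bader--Gelander--Sauer show explicitly that no bound on torsion in terms of volume alone can hold in dimension $3$. The ``Gelander-style machinery'' you cite handles thin cusps in the non-compact finite-volume case; it does not efficiently triangulate arbitrarily thin Margulis tubes in closed $3$-manifolds.

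The idea you are missing is that the injectivity radius is controlled by the diameter: a result of Young (building on Reznikov) gives $\inj(M)\geq\exp(-\diam(M)/C)$ for closed hyperbolic $n$-manifolds. With this in hand, your own parenthetical remark becomes the whole proof: take the nerve of a maximal $r/4$-net with $r=\inj(M)$, and the number of vertices is at most $\vol(M)/\vol(B_{\HH^n}(r/4))\leq\exp(C'\diam(M))$ --- exponential in diameter, not linear in volume. The degree bound and the Hadamard/Gabber estimate then go through exactly as you wrote. So your simplicial complex should have $\exp(C\cdot\diam(M))$ cells rather than $C\cdot\vol(M)$ cells; the final inequality is unaffected, but the route to it must pass through the injectivity-radius bound rather than through a volume-linear triangulation.
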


Let us first note that for $n=2$ our theorem is automatic, since all torsion subgroups in the homology of a closed hyperbolic surface are trivial. Moreover, in dimension at least $4$ results by Bader, Gelander and Sauer \cite{BadGelSau} (see also Equation \eqref{eq_voltors}) together with a comparison between volume and diameter (Lemma \ref{lem_diamvol}) also prove Theorem \ref{thm_torsion}. However, it is known that in dimension $3$, this method cannot work.

We also note that to prove that our theorem is sharp, the most likely examples would be sequences hyperbolic manifolds with exponentially growing torsion and logarithmically growing diameter. Examples of such sequences are abelian covers \cite{SilWil,Rai1,Fellows}, Liu's construction \cite{Liu} and random Heegaard splittings \cite{Kow,Fellows}. However, these manifolds are not known to have small diameters and in some cases are even known not to have small diameters. On the other hand, certain sequences of covers of arithmetic manifolds are conjectured to have exponential torsion growth by Bergeron and Venkatesh \cite{BerVen}. It is known that the corresponding lattice has Property $(\tau)$ with respect to this sequence of covers \cite{SarXue}, from which it follows that their diameter is small by a result of Brooks \cite{Bro} (see Sections \ref{sec_diambds} and \ref{sec_arithm}). So assuming the conjecture by Bergeron and Venkatesh, these manifolds would saturate the bound in Theorem \ref{thm_torsion} up to a multiplicative constant.

The proof of Theorem \ref{thm_torsion} consists of two steps. First we use Young's method from \cite{You} to build a simplicial complex that models our manifold and has a bounded number of cells of any dimension. We then use a lemma due to Bader, Gelander and Sauer \cite{BadGelSau} (based on a lemma of Gabber) that bounds the homological torsion in terms of the number of cells (Lemma \ref{lem_tors}) to derive our bound.

\subsection{Volume and complexity}
The main motivation for our work is formed by results that bound the homological complexity of a complete negatively curved $n$-manifold $M$ in terms of its volume. 

A classical result due to Gromov and worked out by Ballmann, Gromov and Schr\"oder \cite{Gro2,BalGroSch}, states that there exists a constant $C >0$, depending only on $n$, so that the Betti numbers $b_i(M)$ satisfy
\begin{equation}\label{eq_volbetti}
 b_i(M) \leq C \cdot \vol(M),
\end{equation}
for $i=0,\ldots,n$, where $\vol(M)$ denotes the volume of $M$.

More recently, Bader, Gelander and Sauer \cite{BadGelSau} have shown that, when the dimension $n$ is at least $4$, the cardinality of the torsion subgroups $H_i(M,\ZZ)_{\mathrm{tors}}$ in homology can also be bounded in terms of the volume. They show that for all $n\geq 4$, there exists a constant $C>0$, depending only on $n$, so that 
\begin{equation}\label{eq_voltors}
 \log\left(\card{H_i(M,\ZZ)_{\mathrm{tors}}}\right) \leq C\cdot \vol(M).
\end{equation}

\subsection{Counting manifolds by volume}
We will again restrict ourselves to closed hyperbolic manifolds. A classical result due to Wang \cite{Wan} states that in dimension $n\geq 4$ the number of closed hyperbolic manifolds of volume $\leq v$ is finite for any $v\in \RR_+$. Let $N^{\vol}_n(v)$ denote the number such manifolds. The first bounds on the growth of $N^{\vol}_n(v)$ are due to Gromov \cite{Gro1}. Burger, Gelander, Lubotzky and Mozes \cite{BurGelLubMoz} showed that for all $n\geq 4$ there exist $0<a<b \in \RR$ such that 
\begin{equation}\label{eq_volcount}
a \cdot v\log(v)\leq \log\left(N^{\vol}_n(v)\right)\leq b\cdot v\log(v),
\end{equation}
for all $v\in \RR_+$ large enough.

Analogously to the case of the diameter, the volume of a commensurability class of manifolds is the minimal volume realized in that class. Let $NC^{vol}_n(v)$ denote the number of commensurability classes of hyperbolic $n$-manifolds of volume $\leq v$. The first lower bounds on this number are due to Raimbault \cite{Rai2}. Gelander and Levit \cite{GelLev} showed that for all $n\geq 4$ there exist $0<a<b \in \RR$ such that 
\begin{equation}\label{eq_volcommcount}
a\cdot v\log(v)\leq \log\left(NC^{\vol}_n(v)\right)\leq b\cdot v\log(v),
\end{equation}
for all $v\in \RR_+$ large enough. 

\subsection{$3$-dimensional manifolds}

As opposed to in dimension $4$ and above, the number of closed hyperbolic $3$-manifolds of bounded volume is not finite. This for instance follows from work of Thurston (see for example \cite[Chapter E]{BenPet}). This means that there is also no reason to suppose that a statement like Equation \eqref{eq_voltors} holds in dimension $3$. In fact, in \cite{BadGelSau}, the authors prove that no such bound can exist, even for sequences of hyperbolic $3$-manifolds manifolds that Benjamini-Schramm converge to $\HH^3$. We note that Fr\k{a}czyk \cite{Fra} has however proved that for arithmetic manifolds, a similar bound to that of Bader, Gelander and Sauer does hold.

\subsection{Diameters}
The number of closed hyperbolic $n$-manifolds of diameter at most $d$ is finite for any $n\geq 3$ and $d\in \RR_+$. The best known estimates on the number $N^{\diam}_n(d)$ of closed hyperbolic $n$-manifolds of diameter $\leq d$ are due to Young \cite{You}. He proved that for every $n\geq 3$ there exist constants $0<a<b\in\RR^+$ so that 
\begin{equation}\label{eq_diamcount}
a\cdot d \leq \log(\log(N^{\diam}_n(d)))\leq b\cdot d
\end{equation}
for all $d\in \RR_+$ large enough. 

Equation \eqref{eq_volbetti} implies that the Betti numbers of a closed hyperbolic manifold $M$ can also be bounded by its diameter $\diam(M)$ as follows
\[
b_i(M) \leq C \cdot e^{(n-1)\cdot \diam(M)},
\]
for all $i=0,\ldots n$, where $C>0$ is a constant depending only on $n$ (see Lemma \ref{lem_diamvol}). Moreover, because there are hyperbolic surfaces with linearly growing genus and logarithmically growing diameter (for instance random surfaces \cite{BroMak,Mir}), a bound of this generality is necessarily exponential in diameter.

\subsection*{Acknowledgement} The author's research was supported by the ERC Advanced Grant ``Moduli''. The author also thanks the organizers of the Borel Seminar 2017, during which the research for this article was done. Finally, the author thanks Filippo Cerocchi, Jean Raimbault and Roman Sauer for useful conversations.

\section{Background material}

In what follows, $n$ will be a natural number, $M$ a closed oriented hyperbolic $n$-manifold. We will use $\vol(M)$, $\diam(M)$, $\inj(M)$ and $\lambda_1(M)$ to denote the volume, the diameter, the injectivity radius and the first non-zero eigenvalue of the Laplace-Beltrami operator of $M$ respectively. Moreover, $\dist:M\times M\to \RR_+$ will denote the distance function on $M$. Finally, $\HH^n$ will denote hyperbolic $n$-space and $\Isom^+(\HH^n)$ will denote its group of orientation preserving isometries.

\subsection{Bounds involving the diameter}\label{sec_diambds}
Many of our bounds are based on the following well known fact. 
\begin{lem}\label{lem_diamvol} Let $n\geq 2$. There exists a constant $C>0$, depending only on $n$, so that for every closed hyperbolic $n$-manifold $M$ we have
\[C\cdot \log(\vol(M)) \leq \diam(M).  \]
\end{lem}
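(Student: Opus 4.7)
My plan is to bound $\vol(M)$ from above by an exponential function of $\diam(M)$ using a simple covering argument, and then take logarithms. Fix any basepoint $x \in M$. Since $\diam(M) = d$, the closed ball $B_M(x, d)$ in $M$ of radius $d$ around $x$ equals all of $M$, so
\[ \vol(M) = \vol(B_M(x, d)). \]
Lifting to the universal cover, a ball of radius $d$ in $M$ is the image under the covering projection $\mathbb{H}^n \to M$ of a ball of radius $d$ in $\mathbb{H}^n$. Since the covering map does not increase volume,
\[ \vol(M) \leq \vol\bigl(B_{\mathbb{H}^n}(d)\bigr). \]

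The next step is to use the standard formula for the volume of a hyperbolic ball: there exists a constant $A_n > 0$ depending only on $n$ such that $\vol(B_{\mathbb{H}^n}(r)) \leq A_n \, e^{(n-1)r}$ for every $r \geq 0$ (this comes from integrating $\sinh^{n-1}$ against the round measure on $S^{n-1}$). Combining with the previous inequality yields
\[ \vol(M) \leq A_n \, e^{(n-1) \diam(M)}. \]
Taking logarithms gives $\log(\vol(M)) \leq \log(A_n) + (n-1)\diam(M)$.

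Finally, to conclude $C \log(\vol(M)) \leq \diam(M)$, I will separate into two cases based on the size of $\vol(M)$. For $\vol(M)$ large enough (say $\vol(M) \geq A_n^2$), the additive $\log(A_n)$ term is absorbed and one gets $\diam(M) \geq \frac{1}{2(n-1)} \log(\vol(M))$. For $\vol(M)$ below that threshold, either $\log(\vol(M)) \leq 0$ (in which case the inequality is trivial for any $C > 0$), or $\log(\vol(M))$ is bounded by a constant while $\diam(M)$ is bounded below by a positive constant (for instance by the Kazhdan-Margulis / Margulis lemma lower bound on the injectivity radius, which forces $\diam \geq \inj > \varepsilon_n > 0$); choosing $C$ small enough handles this regime. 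Taking the minimum of the two constants gives the required $C$.

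There is no serious obstacle here: the argument is a one-step volume comparison. The only mild subtlety is ensuring the conclusion holds uniformly for small-volume manifolds, which is handled by the Kazhdan-Margulis type lower bound on $\diam(M)$ mentioned above (or simply by noting that the statement is non-vacuous only once $\vol(M) > 1$).
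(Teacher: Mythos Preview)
Your argument is essentially the same as the paper's: cover $M$ by a single ball of radius $\diam(M)$, compare with the hyperbolic ball, and take logarithms. One small correction in your handling of the low-volume regime: the injectivity radius of closed hyperbolic $n$-manifolds is \emph{not} uniformly bounded below (e.g.\ Dehn fillings in dimension $3$), so the chain $\diam \geq \inj > \varepsilon_n$ fails; instead, a uniform lower bound on $\diam(M)$ follows from the Kazhdan--Margulis lower bound on $\vol(M)$ together with your own inequality $\vol(M)\leq \vol(B_{\HH^n}(\diam(M)))$ and the fact that $\vol(B_{\HH^n}(r))\to 0$ as $r\to 0$. With that fix (or your parenthetical alternative of restricting to $\vol(M)>1$), the proof is complete.
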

\begin{proof} The crucial observation is that the ball $B_M(p,\diam(M))$ of radius $\diam(M)$ around any point $p\in M$, by definition of the diameter, covers $M$. This implies that
\[\vol(M)\leq \vol(B_{M}(p,\diam(M))).\]
On the other hand, the volume of $B_{M}(p,\diam(M))$ is at most the volume of a ball of the same radius in $\HH^n$. The volume of a ball $B_{\HH^n}(p,R)$ of radius $R$ around $p\in\HH^n$ is equal to 
\[\vol(B_{\HH^n}(p,R))=\vol(\Sphere^{n-1})\int_0^R \sinh^{n-1}(t)dt,\] 
where $\Sphere^{n-1}$ denotes the $(n-1)$-sphere equipped with the round metric (see for instance \cite[\S 3.4]{Rat}). Putting this together with the inequality above gives the lemma.
\end{proof}

Moreover, we shall need a bound on the injectivity radius in terms of the diameter. The following was proved by Young \cite{You}, based on work by Reznikov \cite{Rez}:

\begin{lem}\label{lem_diaminj} Let $n\geq 2$. There exists a constant $C>0$, depending only on $n$, so that
\[\inj(M) \geq \exp(-\diam(M)/C)\]
for all closed hyperbolic $n$-manifolds $M$.
\end{lem}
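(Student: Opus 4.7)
The plan is to establish the equivalent bound $\diam(M) \gtrsim \log(1/\inj(M))$ via the Margulis thick--thin decomposition. Let $\epsilon_n > 0$ denote the Margulis constant for $\HH^n$. If $\inj(M) \geq \epsilon_n/2$, the inequality is trivial for any sufficiently large $C$, so I focus on the case $\ell := 2\inj(M) < \epsilon_n$. Let $\alpha$ be a shortest closed geodesic (necessarily of length $\ell$), and let $\gamma \in \pi_1(M) \subset \Isom^+(\HH^n)$ be the corresponding primitive loxodromic isometry, with axis $A$, translation length $\ell$, and (in dimensions $n \geq 3$) a rotation angle $\theta$ about $A$.

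The Margulis lemma says that the connected component of the $\epsilon_n$-thin part of $M$ containing $\alpha$ is an embedded tubular neighborhood $T_\alpha$ of $\alpha$, whose lift to $\HH^n$ is the set of $q$ such that $\dist(\gamma^k q, q) \leq \epsilon_n$ for some nonzero $k \in \ZZ$. The crucial step is to lower-bound the in-radius $R$ of $T_\alpha$ --- the distance from $\alpha$ to $\partial T_\alpha$ --- by $c_n \log(1/\ell) - C_n$. This follows from the closed-form displacement formula
\[ \cosh\bigl(\dist(\gamma^k q, q)\bigr) = \cosh(k\ell) + \sinh^2(r)\bigl(\cosh(k\ell) - \cos(k\theta)\bigr), \]
where $r = \dist(q, A)$. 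In the pure translation case ($\theta = 0$), taking $k = 1$ and setting $\dist(\gamma q, q) = \epsilon_n$ immediately gives $r \sim \log(1/\ell)$. For screw motions, Dirichlet's approximation theorem produces $1 \leq k \leq \ceil{1/\sqrt{\ell}}$ with $|k\theta \bmod 2\pi| \leq 2\pi\sqrt{\ell}$, so that both $k\ell$ and $k\theta \bmod 2\pi$ are $O(\sqrt{\ell})$; substituting this $k$ into the displacement formula preserves the logarithmic size of the tube.

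Since $T_\alpha$ embeds isometrically in $M$, every point of $\partial T_\alpha$ lies at distance at least $R$ from $\alpha$ in $M$, whence
\[ \diam(M) \geq R \geq c_n \log\!\bigl(1/(2\inj(M))\bigr) - C_n. \]
Rearranging, and enlarging $C$ to absorb the additive constant and to cover the trivial regime $\inj(M) \geq \epsilon_n/2$, yields $\inj(M) \geq \exp(-\diam(M)/C)$ for some $C = C(n) > 0$. I expect the quantitative tube estimate for screw motions to be the main obstacle: the pure-translation case reduces to a direct trigonometric identity, whereas the rotation requires the Dirichlet step above together with a verification that the chosen power $\gamma^k$ indeed witnesses membership in the thin part at radius $r \sim \log(1/\ell)$, and not that of a different primitive element of $\pi_1(M)$.
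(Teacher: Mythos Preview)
The paper does not supply its own proof of this lemma; it records the statement with a reference to Young and Reznikov. Your Margulis-tube argument is the standard route to such a bound and is correct in dimensions $2$ and $3$. Your closing worry---that an element outside $\langle \gamma \rangle$ might witness thinness at radius $r$---is unfounded: the Margulis lemma already forces every element of displacement $< \epsilon_n$ in the thin-part component containing $\alpha$ to lie in $\langle \gamma \rangle$, so the set you wrote down really is the lift of $T_\alpha$.

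There is, however, a gap for $n \geq 4$. A loxodromic isometry of $\HH^n$ has rotational part in $\mathrm{SO}(n-1)$, which for $n \geq 4$ carries $m = \lfloor (n-1)/2 \rfloor$ rotation angles $\theta_1, \ldots, \theta_m$ rather than a single $\theta$. Your displacement formula survives with $\cos(k\theta)$ replaced by $\langle v, R^k v \rangle$, where $R \in \mathrm{SO}(n-1)$ is the rotational part and $v$ the unit perpendicular direction, but one-dimensional Dirichlet no longer controls this uniformly in $v$. The fix is simultaneous Dirichlet approximation: for any $N$ there exists $1 \leq k \leq N^m$ with $\lvert k\theta_j \bmod 2\pi \rvert \leq 2\pi/N$ for every $j$, whence $R^k$ is uniformly close to the identity and $1 - \langle v, R^k v\rangle = O(N^{-2})$ for all $v$. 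Taking $N \sim \ell^{-1/(m+1)}$ makes both $k\ell$ and each $k\theta_j \bmod 2\pi$ of order $\ell^{1/(m+1)}$, and your computation then gives a tube in-radius of order $\tfrac{1}{m+1}\log(1/\ell)$, which is still logarithmic and suffices for the lemma.
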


In order to control the diameter of a sequence of congruence covers later on, we will use the eigenvalue of their Laplacian in combination with the following theorem due to Brooks \cite[Theorem 1]{Bro}:

\begin{thm}\label{thm_diamlaplace} Let $M$ be a closed hyperbolic manifold and Let $\{M_i\}_{i\in\mathcal{I}}$ be a family of finite covers of $M$. If there exists a constant $C>0$ so that $\lambda_1(M_i)> C$ for all $i\in\mathcal{I}$, then there exist constants $a,b,c>0$ such that
\[a < \frac{\log(\vol(M_i))+c}{\diam(M_i)} < b\]
for all $i\in\mathcal{I}$.
\end{thm}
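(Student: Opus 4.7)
The two inequalities require quite different inputs. The upper bound is a soft consequence of volume comparison in hyperbolic geometry, while the lower bound is where the spectral gap $\lambda_1(M_i) > C$ does real work.

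For the upper bound I would begin by observing that any covering $\pi : M_i \to M$ is a local isometry, so any curve in $M_i$ projects to a curve of the same length in $M$; in particular $\diam(M) \leq \diam(M_i)$. Combined with Lemma \ref{lem_diamvol}, which yields a constant $C_1 > 0$ with $\log(\vol(M_i)) \leq \diam(M_i)/C_1$, this gives
\[
\frac{\log(\vol(M_i)) + c}{\diam(M_i)} \leq \frac{1}{C_1} + \frac{c}{\diam(M)},
\]
so the upper bound holds with $b := 1/C_1 + c/\diam(M)$ for any fixed $c > 0$.

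For the lower bound my plan is the three step chain: spectral gap $\Rightarrow$ isoperimetric gap $\Rightarrow$ exponential ball growth $\Rightarrow$ logarithmic diameter. First, because every $M_i$ has constant sectional curvature $-1$, its Ricci curvature is uniformly bounded below, so Buser's inequality applies: $\lambda_1(M_i) \leq A_n\, h(M_i) + B_n\, h(M_i)^2$ for dimensional constants $A_n, B_n$, where $h(\cdot)$ denotes the Cheeger constant. The hypothesis $\lambda_1(M_i) > C$ then forces $h(M_i) \geq h_0$ for some $h_0 = h_0(C,n) > 0$. Second, I would apply the co-area formula to the volume function $V_p(r) := \vol(B(p,r))$ based at an arbitrary $p \in M_i$. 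Whenever $V_p(r) \leq \vol(M_i)/2$, the Cheeger inequality applied to $B(p,r)$ gives $V_p'(r) = \area(\partial B(p,r)) \geq h_0 V_p(r)$, so $V_p(r) \geq V_p(r_0)\, e^{h_0 (r - r_0)}$ on this range. Since $\pi$ is a local isometry we have $\inj(M_i) \geq \inj(M) > 0$, so taking $r_0 = \inj(M)/2$ yields a seed ball of volume bounded below by a dimensional constant times $r_0^n$. Hence the radius $s_p$ at which $V_p(s_p) = \vol(M_i)/2$ satisfies $s_p \leq r_0 + \frac{1}{h_0}(\log \vol(M_i) + \text{const})$.

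Third, for $r \geq s_p$ the same Cheeger inequality applied to the complement gives $\vol(M_i) - V_p(r) \leq (\vol(M_i)/2)\, e^{-h_0 (r - s_p)}$. Once this quantity drops below $c_n\, \inj(M)^n$, the complement of $B(p,r)$ can no longer contain an entire injectivity-radius ball around any point $q \in M_i$, so $B(q, \inj(M)/2)$ must meet $B(p, r)$ and hence $d(p,q) \leq r + \inj(M)/2$. Taking $p,q$ realising the diameter then yields $\diam(M_i) \leq (2/h_0) \log \vol(M_i) + C'$ for some uniform $C' = C'(n, \inj(M), h_0)$, which rearranges to the desired lower bound after taking $a = h_0/2$ and $c$ large enough to absorb $C'$.

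The main obstacle is the first step: turning the spectral gap into the isoperimetric gap. The classical Cheeger inequality only gives the wrong direction ($\lambda_1 \geq h^2/4$); what is really needed is Buser's converse, which crucially relies on a uniform Ricci lower bound. That ingredient is automatic here since every $M_i$ is hyperbolic, but it is what pins the whole argument to a curvature-constrained setting. Everything after it is a standard Cheeger-type exponential growth computation.
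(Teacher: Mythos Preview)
The paper does not prove this statement: Theorem~\ref{thm_diamlaplace} is quoted as \cite[Theorem~1]{Bro} and used as a black box, so there is no in-paper argument to compare against. Your proposal is a correct self-contained proof, and it is in fact close in spirit to Brooks's original: the upper bound is pure volume comparison (your use of Lemma~\ref{lem_diamvol} plus $\diam(M_i)\geq\diam(M)$ is exactly right), and the lower bound goes through Buser's inequality to convert the spectral gap into a uniform Cheeger constant, then through the differential inequality $V_p'(r)\geq h_0\min\{V_p(r),\vol(M_i)-V_p(r)\}$ for metric balls to force the diameter to be logarithmic in the volume. The only delicate points---that distance spheres are smooth for a.e.\ $r$ so the Cheeger inequality applies, and that $\inj(M_i)\geq\inj(M)$ provides both the seed volume and the terminal volume threshold---you have handled correctly.
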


\subsection{Gelander and Levit's construction}
The lower bound in Theorem \ref{thm_diam} will come from a construction due to Gelander and Levit, which is inspired by a classical construction due to Gromov and Piatetski-Shapiro \cite{GroPia} (see also \cite{Rai2}). We will briefly describe some, but not all, of the details of their construction. For more information we refer to \cite{GelLev}.

Assume we are given six compact hyperbolic $n$-manifolds with boundary $V_0$, $V_1$, $A_+$, $A_-$, $B_+$ and $B_-$ so that
\begin{itemize}
\item[-] $V_0$ and $V_1$ both have four boundary components and $A_+$, $A_-$, $B_+$ and $B_-$ all have two boundary components.
\item[-] All the boundary components of these manifolds are isometric to a fixed closed hyperbolic $(n-1)$-manifold.
\item[-] Each of these six manifolds is embedded in an arithmetic manifolds without boundary that are pairwise non-commensurable (see Section \ref{sec_arithm} for a definition of an arithmetic group).
\end{itemize}
In \cite[Section 4]{GelLev}, Gelander and Levit explain how to construct these manifolds.

We will glue these manifolds according to Schreier graphs for finite index subgroups of the free group $\FF_2=\langle a,b\rangle$. Let $\Cay(\FF_2,\{a,b\})$ denote the Cayley graph of $\FF_2$ with respect to the generating set $\{a,b\}$. Given $H < \FF_2$, the Schreier graph $\Gamma_H$ is the graph
\[\Gamma_H = \Cay(\FF_2,\{a,b\}) / H.\]
Since the edges in $\Cay(\FF_2,\{a,b\})$ come with a natural labeling with the symbols $\{a^\pm,b^\pm\}$, the edges $\Gamma_H$ come with such a labeling as well. Furthermore, note that the number of vertices of $\Gamma_H$ is equal to the index $[\FF_2:H]$. Let us denote the vertex and edge set of $\Gamma_H$ by $V(\Gamma_H)$ and $E(\Gamma_H)$ respectively. 

\begin{dff} Given a finite index subgroup $H < \FF_2$ and a map $\tau:V(\Gamma_H)\to\{0,1\}$, we construct the closed hyperbolic $n$-manifold $M(H,\tau)$ as follows:
\begin{itemize}
\item[-] To each vertex $v\in V(\Gamma_H)$, associate a copy of $V_{\tau(v)}$
\item[-] and to each edge $e\in E(\Gamma_H)$, associate a copy of the pair $A^+,A^-$ or $B^+,B^-$, according to whether it is labeled with an $a^\pm$ or a $b^\pm$.
\item[-] Glue the manifolds together according to the incidence relations in $\Gamma_H$. In particular, the order in which to glue the two blocks associated to an edge depends on whether or not the edge is labeled with an inverse.
\end{itemize}
\end{dff}

Note that there is some ambiguity in the construction above: there is for instance a choice which boundary component to glue to which. Since we are using the construction for a lower bound, this won't make a difference to us. We will from now on assume some choice of gluing is given for every pair $(H,\tau)$. Figure \ref{pic_construction} shows a cartoon of what the local picture of $M(H,\tau)$ might look like:

\begin{figure}[H]
\begin{center}
\begin{overpic}[scale=1]{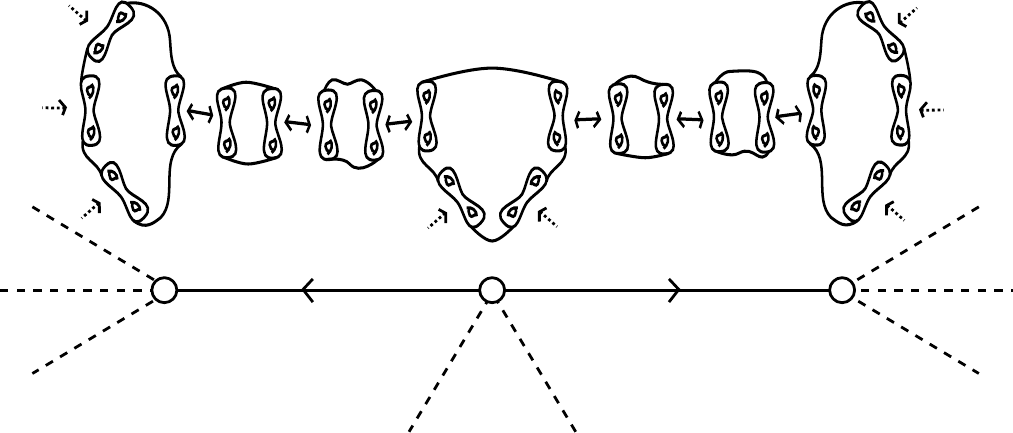}
\put (11,31) {$V_0$}
\put (23,36.5) {$A_+$}
\put (33,36.5) {$A_-$}
\put (47,30) {$V_1$}
\put (61,36.5) {$B_+$}
\put (71,36.5) {$B_-$}
\put (83,31) {$V_0$}
\put (29.5,16) {$a$}
\put (65,16) {$b^{-1}$}
\put (47,3) {$\Gamma_H$}
\put (42,40) {$M(H,\tau)$}
\put (17,11) {$v_1$}
\put (51,11) {$v_2$}
\put (80,11) {$v_3$}
\end{overpic}
\end{center}
\caption{A local picture of $\Gamma_H$ and $M(H,\tau)$, where $\tau(v_1)=\tau(v_3)=0$ and $\tau(v_2)=1$}
\label{pic_construction}
\end{figure}

In \cite[Proposition 3.3]{GelLev}, Gelander and Levit show:
\begin{prp}\label{prp_noncomm} Let $H,H'<\FF_2$ be distinct finite index subgroups and let $\tau:V(\Gamma_H)\to \{0,1\}$ and $\tau':V(\Gamma_{H'})\to \{0,1\}$ be so that 
\[\card{\tau^{-1}(1)} = \card{(\tau')^{-1}(1)} = 1.\]
Then $M(H,\tau)$ and $M(H',\tau')$ are not commensurable.
\end{prp}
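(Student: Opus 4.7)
The plan is to argue by contradiction. Assume $M:=M(H,\tau)$ and $M':=M(H',\tau')$ admit a common finite cover $\widetilde{M}$. Then $\widetilde{M}$ carries two a priori distinct decompositions along totally geodesic hypersurfaces: one pulled back from the block decomposition of $M$, the other from that of $M'$. The two-step strategy is (a) to show that these two decompositions of $\widetilde{M}$ must coincide, and (b) to translate the resulting identification into a combinatorial statement about Schreier graphs that forces $H=H'$.

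Step (a) is the heart of the matter and uses the non-commensurability of the six arithmetic ambient manifolds $N_0, N_1, N_{A_\pm}, N_{B_\pm}$ enveloping the blocks $V_0,V_1,A_\pm,B_\pm$. Each piece of either decomposition of $\widetilde{M}$ is a finite cover of exactly one block, hence embeds in a finite cover of exactly one $N_j$, and therefore carries a well-defined commensurability ``type''. If a hypersurface $\Sigma$ from the $M'$-decomposition were to lie in the interior of a piece $P$ of the $M$-decomposition, then $\Sigma$ would be a totally geodesic hypersurface of a manifold commensurable with $N_{t(P)}$ while also bounding pieces of other types in the $M'$-decomposition; a commensurability invariant (for instance, the invariant trace field of the arithmetic envelope) rules this out. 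Combined with the rigidity of totally geodesic hypersurfaces in dimension $\geq 3$, this forces the two decompositions of $\widetilde{M}$ to coincide on the nose.

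For step (b), encode the common decomposition as a labeled graph $\widetilde{\Gamma}$ whose vertices are the $V$-pieces, whose edges are the $A$- and $B$-pieces labeled by the corresponding generators in $\{a^\pm, b^\pm\}$, and equipped with a vertex labeling $\widetilde{\tau}\colon V(\widetilde{\Gamma})\to\{0,1\}$ recording whether a $V$-piece covers $V_0$ or $V_1$. Then $\widetilde{\Gamma}$ is a Schreier graph $\Gamma_K$ for some finite-index $K\leq \FF_2$, which after a global conjugation may be taken inside $H\cap H'$, and $\Gamma_K$ simultaneously labeled-covers $\Gamma_H$ and $\Gamma_{H'}$, with $\widetilde{\tau}$ equal to the pullback of both $\tau$ and $\tau'$. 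Counting $1$-labeled vertices in two ways gives $[H:K]\cdot|\tau^{-1}(1)|=[H':K]\cdot|(\tau')^{-1}(1)|$, hence $[\FF_2:H]=[\FF_2:H']$. Moreover the preimage in $\Gamma_K$ of the unique $1$-vertex of $\Gamma_H$ coincides with the preimage of the unique $1$-vertex of $\Gamma_{H'}$ as a subset of $K\backslash\FF_2$; after normalizing basepoints so that these $1$-vertices are $H$ and $H'$, this common subset equals both $H/K$ and $H'/K$, and since $K\subseteq H\cap H'$ an elementary argument gives $H=H'$, contradicting distinctness.

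The main obstacle is step (a): ruling out that the two totally-geodesic decompositions of $\widetilde{M}$ interleave rather than coincide. The key input there is the pairwise non-commensurability of the six arithmetic envelopes, detected through commensurability invariants of their trace fields; this is the core of the Gromov--Piatetski-Shapiro philosophy. Step (b) is essentially combinatorial bookkeeping once the decompositions have been matched.
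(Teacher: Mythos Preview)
This proposition is not proved in the paper: it is quoted from \cite[Proposition~3.3]{GelLev}, so there is no in-paper argument to compare your attempt against. Your sketch is in the spirit of the Gelander--Levit proof, which in turn follows the Gromov--Piatetski-Shapiro philosophy: arithmetic invariants attached to the Zariski closure of the fundamental group of a block pin down the commensurability class of its arithmetic envelope, so any two pieces in a common cover that overlap in an open set must carry the same block type, while adjacent blocks in the construction never do; this forces the two hypersurface systems on $\widetilde{M}$ to coincide. Your step~(b) is then the correct bookkeeping.

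Two places deserve tightening. In step~(a), the claim that a piece of $\widetilde{M}$ determines a unique commensurability type among the $N_j$ requires the fundamental group of each building block to be Zariski-dense in $\Isom^+(\HH^n)$; this is part of what Gelander and Levit arrange in their construction, and without it the arithmetic data of a piece need not single out one $N_j$. In step~(b), the phrase ``after normalizing basepoints so that these $1$-vertices are $H$ and $H'$'' is misleading, since conjugating $H$ or $H'$ is not permitted by the statement. What actually works is to take the basepoint of $\widetilde{\Gamma}=\Gamma_K$ at a $V_1$-vertex and note that the label-fundamental-group of $\Gamma_H$ at \emph{any} vertex equals $H$ itself; hence $K<H\cap H'$ with no conjugation needed, the two preimages of the respective $1$-vertices are literally $H/K$ and $H'/K$ inside $\FF_2/K$, and their equality gives $H=H'$.
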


The upshot of this proposition is that the construction of Gelander and Levit gives rise to at least $a_N(\FF_2)$ non-commensurable manifolds on built out of graphs with $n$ vertices, where $a_N(\FF_2)$ denotes the number of index $N$ subgroups of $\FF_2$.

\subsection{Graphs and groups}

To work with Gelander and Levit's construction, we will need two bounds. We need a lower bound on $a_N(\FF_2)$ and we need to know what the typical diameter of a Schreier graph of an index $N$ subgroup of $\FF_2$ is.

For details on the subgroup growth of $\FF_2$, we refer to Chapter 2 in the monograph by Lubotzky and Segal \cite{LubSeg}. We will use the following bound, that can be found as a special case of \cite[Theorem 2.1]{LubSeg}:
\begin{thm}\label{thm_subgrps} We have
\[ a_N(\FF_2)\sim N \cdot N!\]
as $N\to\infty$.
\end{thm}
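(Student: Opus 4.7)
The plan is to follow the classical approach of Marshall Hall, which expresses subgroup counts in terms of transitive homomorphisms into symmetric groups. Since $\FF_2$ is free on $\{a,b\}$, a homomorphism $\FF_2 \to \sym_N$ is specified by a pair $(\sigma,\tau)\in \sym_N\times\sym_N$. Let $t_N$ denote the number of such pairs whose images generate a transitive subgroup of $\sym_N$. The first step is to set up the standard bijection: an index-$N$ subgroup $H<\FF_2$ determines the transitive action of $\FF_2$ on the coset space $\FF_2/H$ with the marked coset $H$, and conversely a transitive pair $(\sigma,\tau)$ together with a choice of base point in $\{1,\ldots,N\}$ recovers a subgroup via the stabilizer. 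Fixing the base point to be $1$ and letting the other $N-1$ points be relabeled in $(N-1)!$ ways gives
\[a_N(\FF_2) \;=\; \frac{t_N}{(N-1)!}.\]

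Next, I would prove Hall's recursion by decomposing an arbitrary pair of permutations according to the orbit of $1$: if this orbit has size $k$, there are $\binom{N-1}{k-1}$ ways to choose the remaining elements of the orbit, $t_k$ choices for a transitive pair on it, and $((N-k)!)^2$ choices for an arbitrary pair on the complement. This yields
\[(N!)^2 \;=\; \sum_{k=1}^{N}\binom{N-1}{k-1}\, t_k\,((N-k)!)^2,\]
which after rearranging gives the identity
\[\frac{t_N}{(N!)^2} \;=\; 1 \;-\; \sum_{k=1}^{N-1}\binom{N-1}{k-1}\frac{t_k\,((N-k)!)^2}{(N!)^2}.\]

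The main step is to show that the error sum tends to $0$, which will imply $t_N\sim (N!)^2$ and hence $a_N(\FF_2)\sim (N!)^2/(N-1)! = N\cdot N!$. Using the trivial bound $t_k\leq (k!)^2$ inside each term yields
\[\frac{t_k\,((N-k)!)^2}{(N!)^2}\binom{N-1}{k-1} \;\leq\; \frac{1}{N}\binom{N}{k}^{-1},\]
so the sum over $1\leq k\leq N-1$ is bounded by $\bigO{1/N}$, with the dominant contributions coming from the boundary terms $k=1$ and $k=N-1$. The heart of the matter is this tail estimate: the binomial coefficient $\binom{N}{k}^{-1}$ decays rapidly for $k$ away from the endpoints, so only the two boundary terms, each of order $1/N$, survive, and in particular $t_N/(N!)^2 = 1 - \bigO{1/N}$.

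I expect the only real obstacle to be packaging the tail estimate cleanly; this is routine once the recursion is in hand. Combining the estimate with the identity $a_N(\FF_2) = t_N/(N-1)!$ gives the claimed asymptotic
\[a_N(\FF_2) \;\sim\; \frac{(N!)^2}{(N-1)!} \;=\; N\cdot N!\]
as $N\to\infty$, completing the proof. For a reference one can cite Chapter 2 of Lubotzky--Segal \cite{LubSeg}, where this argument is carried out for free groups of arbitrary rank.
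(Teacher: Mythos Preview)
Your argument is correct and is precisely the classical Marshall Hall computation; the paper itself does not give a proof of this statement but simply cites it as a special case of \cite[Theorem~2.1]{LubSeg}, so you are effectively supplying the argument behind that reference.

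One minor arithmetic slip worth fixing: substituting the trivial bound $t_k\leq(k!)^2$ actually gives
\[
\binom{N-1}{k-1}\,\frac{(k!)^2((N-k)!)^2}{(N!)^2}
=\frac{k}{N}\,\binom{N}{k}^{-1},
\]
not $\tfrac{1}{N}\binom{N}{k}^{-1}$ as you wrote. This extra factor of $k$ is harmless for the conclusion, since pairing $k$ with $N-k$ yields
\[
\sum_{k=1}^{N-1}\frac{k}{N}\binom{N}{k}^{-1}
=\frac{1}{2}\sum_{k=1}^{N-1}\binom{N}{k}^{-1}
=\frac{1}{N}+O\!\left(\frac{1}{N^2}\right),
\]
so the tail estimate and hence the asymptotic $a_N(\FF_2)\sim N\cdot N!$ go through unchanged.
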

In the theorem above, we write $f(N)\sim g(N)$ as $N\to\infty$ to mean that 
\[\lim_{N\to\infty} f(N)/g(N) =1.\]

The distance between two vertices in a connected graph is the minimal number of edges in a path between these two vertices. The diameter $\diam(\Gamma)$ of a finite graph $\Gamma$ is the maximal distance realized by two vertices in $\Gamma$.

To control the diameter of a typical Schreier graph we will use results from random graphs. The fact that the diameter of a random regular graph is bounded by a logarithmic function of the number of vertices can for instance be derived from the fact that a random regular graph has a large spectral gap with probability tending to $1$ as the number of vertices tends to infinity (see \cite{Fri,Pud,LinHooWig,BroSha}). The sharpest result however does not use this method and is due to Bollob\'as and Fernandez de la Vega \cite[Theorem 3]{BolFer}. We will state their result only in the case of $4$-regular graphs. 
\begin{thm}\label{thm_diamgraph} Let $\PP_N$ denote the uniform probability measure on the set of isomorphism classes of $4$-regular graphs on $N$. There exists a function $E:\NN\to\RR$ so that
\[E(N) = o(\log(N))\]
as $N\to\infty$ and
\[\PP_N[\text{The graph has diameter }\leq \log_3(N) + E(N)]\to 1\]
as $N\to\infty$.
\end{thm}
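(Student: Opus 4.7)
The plan is to work in the configuration model. A uniformly random $4$-regular graph on $N$ vertices can be sampled by attaching $4$ half-edges to each vertex and picking a uniformly random perfect matching on the $4N$ half-edges, then conditioning on the result being simple. Since the probability of simplicity is bounded away from $0$ as $N\to\infty$, any event that holds with probability $1-o(1)$ in the configuration model also holds with probability $1-o(1)$ under $\Pro{N}{\cdot}$. The target $\log_3(N)+E(N)$ is natural because in the $4$-regular tree a ball of radius $r$ contains $1+4(3^r-1)/2\sim 3^r$ vertices, so balls of radius just above $\tfrac12\log_3(N)$ from two different centers should begin to meet in a graph on $N$ vertices.

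First I would prove a quantitative ball-growth estimate. Fix a slowly growing function $f:\NN\to\RR$ with $f(N)\to\infty$ and $f(N)=o(\log N)$, and set $r_0=\tfrac12\log_3(N)-f(N)$. For a vertex $v$, reveal the matching on the $4N$ half-edges in breadth-first order to build $B_{r_0}(v)$. At each step the partner of the current half-edge is uniform over those still unmatched, so the probability that a revelation creates a cycle or revisits the already explored region is $O(|B_{r_0}(v)|/N)=O(N^{-1/2+o(1)})$. A direct union bound, or a martingale concentration inequality applied to the associated Galton--Watson-like process of offspring $3$, shows that with probability $1-o(N^{-3})$ the ball $B_{r_0}(v)$ is a tree of size at least $c\cdot 3^{r_0}\geq N^{1/2-o(1)}$; union-bounding over $v$ upgrades this to a simultaneous statement for every vertex.

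Next I would bound the probability that a given pair $u\neq v$ is at distance more than $2r_0+g(N)$, for a second slowly growing $g=o(\log N)$ to be chosen. Explore $B_{r_0}(u)$ and $B_{r_0}(v)$ successively; since at most $O(N^{1/2+o(1)})$ matchings are exposed, these balls are disjoint with probability $1-o(1)$, and conditionally on the exposed data the remainder of the matching is uniform on the unexposed half-edges. The boundary of $B_{r_0}(u)$ then contains $\Theta(N^{1/2-o(1)})$ free half-edges; extending the exploration for $g(N)$ further levels multiplies the frontier by roughly $3^{g(N)}$, and the probability that none of these edges lands in $B_{r_0}(v)$ is at most $(1-c\,N^{-1/2+o(1)})^{3^{g(N)}\cdot N^{1/2-o(1)}}$, which is $o(N^{-3})$ once $g(N)$ is chosen a sufficiently large (still $o(\log N)$) multiple of $f(N)$. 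A union bound over the $N^2$ ordered pairs yields $\diam(\Gamma)\leq 2r_0+g(N)=\log_3(N)+E(N)$ with $E(N)=g(N)-2f(N)=o(\log N)$, with probability tending to $1$.

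The main obstacle is making the branching-process approximation uniform and quantitative enough that the $N^2$ pairwise union bound closes. Controlling the deviation of $|B_r(v)|$ from $3^r$ with polynomial tightness, and taming the mild dependence between the two exploration processes without losing a factor that destroys the union bound, is the delicate step; this is where Bollob\'as and Fernandez de la Vega's refined switching and second-moment arguments, rather than off-the-shelf Galton--Watson estimates, appear to be needed to produce the sharp leading constant $\log_3(N)$ rather than merely some $C\log(N)$.
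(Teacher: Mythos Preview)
The paper does not prove this statement: it is quoted without proof as the $4$-regular case of Theorem~3 of Bollob\'as and Fernandez de la Vega \cite{BolFer}. There is therefore no proof in the paper to compare your proposal against.

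For what it is worth, your outline---sampling in the configuration model, growing breadth-first balls of radius roughly $\tfrac12\log_3(N)$ from each of two vertices, and then showing the two frontiers meet after $o(\log N)$ further steps with failure probability $o(N^{-2})$---is the standard strategy and is close in spirit to the argument in \cite{BolFer}. The sketch is sound at the level you give it, with the caveat you yourself flag: obtaining tail bounds on the ball sizes sharp enough to survive the union bound over all $N^2$ pairs is where the genuine work lies, and your proposal does not carry this out.
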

We note that the bound is basically as low as one could possibly expect. Indeed, with an argument very similar to that in Lemma \ref{lem_diamvol} it can be shown that the diameter of a $4$-regular graph is at least of the order $\log_3(N)$. 

We won't go into random regular graphs in this note and refer the reader to \cite{BolFer,Bol,Wor} for the details. We do however note that uniformly picking an index $N$ Schreier graph is a slightly different model for random $4$-valent graphs than the uniform probability measure on isomorphism classes of $4$-valent graphs. It turns out that the two models are what is called contiguous: they have the same asymptotic $0$-sets, which is enough for our purposes. More details on this can be found in \cite[Section 4]{Wor} and \cite{GreJanKimWor}.

\subsection{Arithmetic manifolds}\label{sec_arithm}
Let $G$ be a semisimple Lie group of noncompact type that is defined over $\QQ$ (in our case $G$ will always be $\Isom^+(\HH^n)$). A discrete subgroup $\Gamma < G(\QQ)$ will be called arithmetic if there is a $\QQ$-embedding $\rho:G\to \GL_m(\RR)$ such that $\rho(\Gamma)$ is commensurable with $G(\ZZ) = \GL_m(\ZZ)\cap \rho(G)$. Arithmetic groups come with a sequence of finite index subgroups called congruence groups. For general background on arithmetic groups, we refer to \cite{MacRei,Mor}. 

Recall that a lattice $\Gamma < \Isom^+(\HH^n)$ is called uniform if $\Gamma\backslash\Isom^+(\HH^n)$ is compact. We will call $\Gamma$ maximal if it is not properly contained in another lattice. The result we will need is a bound on the number of maximal uniform arithmetic lattices up to a given covolume in $\Isom^+(\HH^n)$ for $n\geq 3$. To this end, let $\mathrm{MAL}^u_n(v)$ denote the number of maximal uniform arithmetic lattices of covolume $\leq v$ in $\Isom^+(\HH^n)$. The following theorem is due to Belolipetsky \cite{Bel} in dimension $n\geq 4$ and Belolipetsky, Gelander, Lubotzky and Shalev \cite{BelGelLubSha} in dimensions $2$ and $3$.
\begin{thm}\label{thm_mal} Let $n\geq 2$ and $\varepsilon >0$. There exist constants $\alpha=\alpha(n) \in \RR_+$ and $\beta=\beta(n,\varepsilon)\in\RR_+$ so that 
\[v^\alpha \leq \mathrm{MAL}^u_n(v) \leq v^{\beta\cdot (\log v)^\varepsilon}\]
for all $v\in\RR^+$ large enough.
\end{thm}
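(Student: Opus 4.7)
The plan is to establish the upper and lower bounds separately, exploiting the arithmetic classification of lattices in $\Isom^+(\HH^n)$. Any maximal arithmetic lattice $\Gamma$ arises, up to conjugation, from a pair $(k,\mathbf{G})$ consisting of a totally real number field $k$ and an admissible absolutely almost simple $k$-group $\mathbf{G}$ such that $\mathbf{G}(k\otimes_\QQ \RR)$ has $\Isom^+(\HH^n)$ as a non-compact archimedean factor and is compact at all other archimedean places, together with a coherent family of parahoric subgroups at the finite places of $k$.

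For the upper bound, I would follow Belolipetsky's strategy. The main input is Prasad's volume formula, which expresses $\vol(\Gamma\backslash\HH^n)$ as a product of a global factor involving a positive power of the absolute discriminant $|d_k|$, values of certain Dedekind-type $L$-functions, and a local factor at each finite place of $k$. Combined with the Odlyzko-type inequality $|d_k|^{1/[k:\QQ]} \geq c > 1$, the hypothesis $\vol(\Gamma\backslash\HH^n) \leq v$ forces both $[k:\QQ] = O(\log v)$ and $|d_k| \leq v^{O(1)}$. The upper bound then reduces to three counting problems: the number of totally real fields $k$ with $[k:\QQ] = O(\log v)$ and $|d_k| \leq v^{O(1)}$ (controlled by Schmidt-type field-counting bounds, optimised over the degree); the number of admissible $k$-forms $\mathbf{G}$ of the prescribed archimedean type (controlled by the finite set of ramification places); and the number of coherent parahoric families whose associated local factors in Prasad's formula accumulate to at most $v$. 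Multiplying these counts yields the claimed bound $v^{\beta(\log v)^\varepsilon}$.

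For the lower bound, I would exhibit an explicit family of pairwise non-commensurable maximal arithmetic lattices of covolume at most $v$. A natural construction, due to Belolipetsky for $n \geq 4$ and to Belolipetsky--Gelander--Lubotzky--Shalev in dimension $3$, fixes a convenient base commensurability class and varies the parahoric data at a suitable set of primes whose combined contribution in Prasad's formula stays below $v$. A non-commensurability criterion for the resulting lattices (distinct local data at a prime produce distinct commensurability classes) then yields at least $v^{\alpha}$ distinct maximal uniform arithmetic lattices of covolume $\leq v$.

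The main obstacle is the sharp control in the upper bound: the exponent $(\log v)^\varepsilon$, rather than a constant or a full $\log v$, is dictated by the delicate interplay between the Odlyzko discriminant bounds and the counting of number fields of fixed degree and bounded discriminant, and requires careful optimisation over $[k:\QQ]$. A secondary difficulty is the case $n=3$: here arithmetic lattices arise from both quaternionic and orthogonal constructions, so one must analyse both families in parallel and verify that the combined count does not exceed the asserted bound.
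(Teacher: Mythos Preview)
The paper does not prove this theorem; it is quoted in the background section as a result of Belolipetsky (for $n\geq 4$) and of Belolipetsky--Gelander--Lubotzky--Shalev (for $n=2,3$), and is used only as a black box in the proof of Corollary~\ref{cor_arithm}. So there is no ``paper's own proof'' to compare against.

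That said, your outline is a fair summary of the strategy in those references: Prasad's volume formula plus Odlyzko-type discriminant bounds to constrain the field and the form, followed by counting the admissible fields, forms, and parahoric data. One point in your sketch is muddled: for the lower bound you say you fix a commensurability class, vary parahoric data, and then invoke a ``non-commensurability criterion''. Varying parahorics over a fixed $k$-form produces \emph{commensurable} lattices, so no such criterion applies there; non-commensurable families are obtained instead by varying the field or the ramification of the form (e.g.\ the finite ramification set of a quaternion algebra in the $n=3$ case). For the statement at hand---counting maximal lattices rather than commensurability classes---non-commensurability is not needed anyway, so this confusion is harmless for the conclusion but should be cleaned up.
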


Let us now restrict to hyperbolic $3$-manifolds. To get bounds on the diameters of congruence covers of arithmetic manifolds, we will use the following theorem due to Sarnak and Xue \cite{SarXue}:
\begin{thm}\label{thm_tau} Let $\Gamma<\Isom^+(\HH^3)$ be a uniform arithmetic lattice. Then there exists a constant $C=C(\Gamma)>0$ so that for all congruence subgroups $\Gamma'<\Gamma$ we have
\[\lambda_1(\Gamma'\backslash \HH^3) \geq C.\]
\end{thm}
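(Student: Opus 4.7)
The plan is to exploit the arithmetic structure of $\Gamma$ to convert the spectral gap statement into one about automorphic representations of $\GL_2$ over a number field, where approximations to the Ramanujan--Selberg conjecture furnish the required uniform lower bound. This is the standard strategy and is the actual content of the Sarnak--Xue result in this setting.

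First I would rephrase $\lambda_1$ representation-theoretically. The regular representation of $\PSL_2(\CC) \cong \Isom^+(\HH^3)$ on $L^2(\Gamma' \backslash \PSL_2(\CC))$ decomposes into irreducible unitary representations, and the nonzero Laplace eigenvalues on $\Gamma' \backslash \HH^3$ are exactly the Casimir eigenvalues on the spherical vectors of the nontrivial summands. Small positive eigenvalues correspond to spherical complementary series representations $\pi_s$ with parameter $s \in (0,1)$, whose Laplace eigenvalue is $1 - s^2$. A uniform lower bound on $\lambda_1(\Gamma' \backslash \HH^3)$ is therefore equivalent to a uniform upper bound $s \leq s_0 < 1$ on the complementary series parameters that occur, as $\Gamma'$ varies over congruence subgroups of $\Gamma$.

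Next I would invoke the arithmetic description: up to commensurability, $\Gamma$ is the image in $\PSL_2(\CC)$ of the norm-one units of an order $\mathcal{O}$ in a quaternion division algebra $B$ over a number field $k$ with exactly one complex place $v_\infty$ and ramified at every real place of $k$, and the congruence subgroups of $\Gamma$ correspond to compact open subgroups of $B^\times(\mathbb{A}_{k,f})$. The Jacquet--Langlands correspondence injects automorphic representations of $B^\times$ into those of $\GL_2/k$, preserving local components at places where $B$ splits; in particular the component at $v_\infty$ is preserved, since any quaternion algebra over $\CC$ is isomorphic to $M_2(\CC)$. Hence every complementary series representation of $\PSL_2(\CC)$ occurring in $L^2(\Gamma' \backslash \PSL_2(\CC))$ is realized, at $v_\infty$ and with the same parameter, by some cuspidal automorphic representation of $\GL_2(\mathbb{A}_k)$. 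Applying any nontrivial approximation to the Ramanujan conjecture for cuspidal automorphic representations of $\GL_2$ over $k$ --- for instance the Kim--Sarnak bound, together with its extensions to number fields --- yields an absolute $s_0 = s_0(k) < 1$ with $s \leq s_0$ for every such parameter, and the theorem follows with $C = 1 - s_0^2$.

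The main obstacle is bookkeeping rather than conceptual: one needs to verify that the classical congruence subgroups considered here really do correspond to a family of compact open subgroups of $B^\times(\mathbb{A}_{k,f})$ to which Jacquet--Langlands and the Ramanujan-type bound apply uniformly. The crucial point that makes this work is that the archimedean component at $v_\infty$, which alone governs $\lambda_1$, is untouched by any finite-level congruence condition, so the archimedean bound $s_0$ is automatically uniform in the level once it is established abstractly.
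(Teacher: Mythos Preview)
The paper does not prove this theorem: it is quoted as background and attributed to Sarnak and Xue \cite{SarXue} with no argument given, so there is no ``paper's own proof'' to compare your proposal against.

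Your outline is a correct and standard route to the statement, but it is worth flagging that it is \emph{not} the argument of the cited Sarnak--Xue paper. Sarnak and Xue do not use the Jacquet--Langlands transfer together with an archimedean bound towards Ramanujan; rather, they prove a density (multiplicity) bound for complementary series representations in the congruence tower by combining a lower bound for the trace coming from counting lattice points with an upper bound coming from the decay of matrix coefficients, and deduce the uniform spectral gap from that. Your approach via Jacquet--Langlands plus any nontrivial approximation to Ramanujan (Gelbart--Jacquet already suffices; Kim--Sarnak is anachronistic relative to \cite{SarXue} but gives a sharper constant) is arguably more direct and yields explicit numerical bounds on $\lambda_1$, whereas the Sarnak--Xue density method is softer but applies in settings where no Jacquet--Langlands transfer or Ramanujan-type input is available. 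Either way the conclusion used in the paper --- a uniform lower bound on $\lambda_1$ over congruence subgroups --- follows.
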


If a lattice $\Gamma<\SL_2(\CC)$ has a sequence $\{\Gamma_N\}_{N}$ of finite index subgroups so that $\lambda_1(\Gamma_N\backslash \HH^3)$ is uniformly bounded from below for all $N\in\NN$, then $\Gamma$ is said to have Property ($\tau$) with respect to this sequence. In the special case where $\Gamma$ is arithmetic and the sequence consists of congruence subgroups, Property ($\tau$) is sometimes also called the Selberg property.  

Congruence covers are also believed to have large torsion subgroups in their homology. Specifically, there is the following conjecture, due to Bergeron and Venkatesh \cite{BerVen}, which we state in the special case of hyperbolic $3$-manifolds:
\begin{con}\label{con_berven} Let $\Gamma<\SL_2(\CC)$ be a uniform arithmetic lattice and $\ldots<\Gamma_N<\Gamma_{N-1}<\ldots<\Gamma_1<\Gamma$ a sequence of congruence subgroups of $\Gamma$ so that $\cap_N \Gamma_N = \{1\}$. Then
\[\lim_{N\to\infty} \frac{\log\left(\card{H_1(\Gamma_N \backslash \HH^3,\ZZ)_\mathrm{tors}}\right)}{\vol(\Gamma_N \backslash \HH^3)} = \frac{1}{6\pi}.\]
\end{con}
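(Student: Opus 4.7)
The plan would follow the Bergeron--Venkatesh strategy via analytic torsion, which (as the status of the statement as a conjecture suggests) stops short of a full proof in the case of trivial coefficients. First I would invoke the Cheeger--M\"uller theorem to identify the Ray--Singer analytic torsion $T(M_N)$ of $M_N = \Gamma_N\backslash\HH^3$ with its Reidemeister torsion. For a local system over a closed odd-dimensional manifold with $b_1$ possibly nonzero, the Reidemeister torsion splits as a combination of $\log\card{H_1(M_N,\ZZ)_{\mathrm{tors}}}$ (Poincar\'e duality in dimension $3$ folds in $H_2$ as well) and regulator terms coming from the free part of $H_*(M_N,\ZZ)$. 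I would therefore need to show that these regulator contributions are $o(\vol(M_N))$, which uses Gromov-type bounds on $b_1$ and that any rational homology is carried by classes of controlled comass; this part is where working with the trivial representation, rather than a strongly acyclic local system, becomes delicate.

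Next I would express $\log T(M_N)$ as a regularized integral $\int_0^\infty f(t)\,K_{M_N}(t)\,dt$ of a heat-kernel trace on forms, and split it into a short-time piece $t\leq T_N$ and a long-time piece $t\geq T_N$. For the short-time piece, Benjamini--Schramm convergence of the congruence tower $\{M_N\}$ to $\HH^3$ gives
\[\frac{1}{\vol(M_N)}\int_0^{T_N} f(t)\,K_{M_N}(t)\,dt \;\longrightarrow\; \int_0^\infty f(t)\,K_{\HH^3}(t)\,dt \;=\; t^{(2)}(\HH^3),\]
and a direct computation with the Plancherel measure for $\Isom^+(\HH^3)$ yields the $L^2$-analytic torsion $t^{(2)}(\HH^3) = 1/(6\pi)$, matching the constant in the conjecture. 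The Benjamini--Schramm input is fed by the fact that a congruence tower has injectivity radius going to infinity on a set of asymptotically full measure, which itself is consistent with the Property ($\tau$) input of Theorem \ref{thm_tau}.

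The main obstacle, and the reason the statement remains a conjecture, is the long-time piece: controlling the contribution of the small eigenvalues of the Hodge Laplacian on $1$-forms as $N\to\infty$. A priori one needs a uniform lower bound on $\lambda_1$ on coclosed $1$-forms, or at least a uniform polynomial bound on the number of eigenvalues in a shrinking interval $[0,\varepsilon]$, in order to ensure that the long-time tail is $o(\vol(M_N))$. Property ($\tau$) from Theorem \ref{thm_tau} gives this input for functions but not for forms, and the analogue for coclosed $1$-forms would require a spectral gap for the representation-theoretic pieces of $L^2(\Gamma_N\backslash\Isom^+(\HH^3))$ that correspond to complementary series representations close to the trivial one. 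This is precisely the gap between the strongly acyclic case handled in \cite{BerVen} and the trivial-coefficient case stated in Conjecture \ref{con_berven}; I would expect any honest attack to have to resolve this spectral question, likely by combining arithmetic methods with delicate trace formula estimates.
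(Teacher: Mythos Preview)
The paper does not prove this statement: it is explicitly labeled and used as a \emph{conjecture} (due to Bergeron and Venkatesh, cited as \cite{BerVen}), and the paper only invokes it conditionally in Section~4.2 to argue that the bound in Theorem~\ref{thm_torsion} would be sharp. There is therefore no proof in the paper to compare your proposal against.

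That said, your outline is an accurate summary of the analytic-torsion strategy and, importantly, you correctly identify why it is not a proof: the Cheeger--M\"uller / heat-kernel machinery reduces the problem to two uncontrolled pieces, the regulator terms from the free part of $H_*$ and the long-time contribution governed by small eigenvalues of the Hodge Laplacian on $1$-forms, and Property~$(\tau)$ (Theorem~\ref{thm_tau}) only handles the spectral gap on functions, not on forms. This is exactly the known obstruction, so your write-up is honest about its status. Just be aware that what you have produced is a roadmap with an acknowledged gap, not a proof, and the paper itself makes no claim to having one.
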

The reason for the constant $1/6\pi$ above is that it is the $\ell^2$-torsion of $\HH^3$.

\subsection{Torsion and the nerve lemma}

To bound torsion in our manifolds we will use a lemma by Bader, Gelander and Sauer \cite[Lemma 5.2]{BadGelSau}, that they derived from a lemma due to Gabber (which can for instance be found in \cite[Lemma 1]{Sou}). In this lemma, the degree of a vertex ($0$-cell) in a simplicial complex is the degree of that vertex in the $1$-skeleton of the given complex.

\begin{lem}\label{lem_tors} For all $D,p \in \NN$ there exists a constant $C=C(D,p)>0$ so that for any simplicial complex $X$ with $\leq V$ vertices that all have degree $\leq D$ we have:
\[ \log\left(\card{H_p(X,\ZZ)_{\mathrm{tors}}}\right) \leq C\cdot V.\]
\end{lem}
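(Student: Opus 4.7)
The plan is to apply Gabber's lemma—an upper bound on the torsion of the cokernel of an integer matrix via Hadamard's inequality on its maximal minors—to the cellular boundary operator $\partial_{p+1}\colon C_{p+1}(X;\ZZ)\to C_p(X;\ZZ)$. The argument splits into a combinatorial simplex count, a purely algebraic reduction from $H_p$-torsion to cokernel torsion, and a determinant estimate.

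First, I would use the bounded-degree hypothesis to bound the number of simplices. Every simplex sits inside the closed $1$-neighborhood of any of its vertices, so a vertex of degree at most $D$ is contained in at most $\binom{D}{k}$ of the $k$-simplices of $X$; a double count then gives $|X_k|\le C'(D,k)\,V$ for some constant depending only on $D$ and $k$. In particular the ranks of the free abelian groups $C_p(X;\ZZ)$ and $C_{p+1}(X;\ZZ)$ are linear in $V$.

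Second, I would pass from $H_p(X,\ZZ)$ to $\mathrm{coker}(\partial_{p+1})$. Since $C_p$ is free abelian and $C_p/\ker(\partial_p)$ embeds in the free group $C_{p-1}$, the subgroup $\ker(\partial_p)$ splits off as a direct summand of $C_p$. Consequently $\mathrm{coker}(\partial_{p+1})\cong H_p(X,\ZZ)\oplus F$ for some free $F$, so the torsion of $H_p(X,\ZZ)$ agrees with the torsion of $\mathrm{coker}(\partial_{p+1})$.

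Finally, I would bound this cokernel torsion by linear algebra. By the Smith normal form, its order equals the greatest common divisor of the $r\times r$ minors of the matrix of $\partial_{p+1}$ in the standard bases, where $r=\mathrm{rank}(\partial_{p+1})$, hence is bounded by the absolute value of any single such minor. The matrix of $\partial_{p+1}$ has entries in $\{-1,0,1\}$ with exactly $p+2$ nonzero entries per column (one for each face of a $(p+1)$-simplex), so every column has Euclidean norm $\sqrt{p+2}$. Hadamard's inequality then bounds any $r\times r$ minor by $(p+2)^{r/2}$, and since $r\le |X_{p+1}|\le C'(D,p+1)\,V$, taking logarithms yields the lemma with $C(D,p)=C'(D,p+1)\log(p+2)/2$. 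The only conceptual content in this plan is the Hadamard/Gabber determinant step; the reduction to $\mathrm{coker}(\partial_{p+1})$ and the simplex count are routine given the bounded-degree assumption.
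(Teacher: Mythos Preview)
Your proposal is correct and follows exactly the approach the paper points to: the paper does not give its own proof of this lemma but cites it from \cite[Lemma~5.2]{BadGelSau}, noting explicitly that it is derived from Gabber's lemma (the Hadamard-type bound on torsion in the cokernel of an integer matrix). Your simplex count from the degree bound, the splitting $\mathrm{coker}(\partial_{p+1})\cong H_p\oplus F$, and the Hadamard estimate on the $r\times r$ minors of $\partial_{p+1}$ reproduce precisely the argument in \cite{BadGelSau}.
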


The simplicial complex we will use will be the nerve of an open cover of our manifold. Recall that an open cover of a space $X$ is a collection $\mathcal{U}=\{U_i\}_{i\in\mathcal{I}}$ of open subsets of $X$ so that 
\[ X = \bigcup_{i\in\mathcal{I}}U_i.\]
The nerve $\mathcal{N}(\mathcal{U})$ of this cover is the simplicial complex that has the sets $U_i$ as vertices and contains a $k$-simplex for every $k$-tuple of elements in $\mathcal{U}$ that have a non-trivial intersection. See \cite[Section 4G]{Hat} for more details.

The following statement is known as the nerve lemma and can for instance be found as \cite[Corollary 4G.3]{Hat}:
\begin{lem}\label{lem_nerve}
If $\mathcal{U}$ is an open cover of a paracompact space $X$ such that every
nonempty intersection of finitely many sets in $\mathcal{U}$ is contractible, then $X$ is homotopy equivalent to the nerve $\mathcal{N}(\mathcal{U})$.
\end{lem}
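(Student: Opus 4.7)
The plan is to build an explicit \emph{nerve map} $f : X \to |\mathcal{N}(\mathcal{U})|$ from a partition of unity and to construct a candidate inverse $g$ by induction over the skeleta of $\mathcal{N}(\mathcal{U})$, exploiting contractibility of every finite intersection at each step.

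Since $X$ is paracompact, one can choose a partition of unity $\{\varphi_i\}_{i\in\mathcal{I}}$ subordinate to $\mathcal{U}=\{U_i\}_{i\in\mathcal{I}}$. Letting $v_i$ denote the vertex of $\mathcal{N}(\mathcal{U})$ corresponding to $U_i$, I would set
\[ f(x) = \sum_{i\in\mathcal{I}} \varphi_i(x)\, v_i \;\in\; |\mathcal{N}(\mathcal{U})|. \]
At each $x \in X$, only finitely many $\varphi_i(x)$ are nonzero, and the corresponding $U_i$'s all contain $x$, so their indices span a simplex of the nerve; this makes $f$ well-defined and continuous. Next one constructs $g : |\mathcal{N}(\mathcal{U})| \to X$ inductively over the skeleta of $\mathcal{N}(\mathcal{U})$, preserving the invariant that $g(\sigma) \subseteq U_\sigma := \bigcap_{i \in \sigma} U_i$ for every simplex $\sigma$. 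On a vertex $v_i$, pick any point of $U_i$. Having defined $g$ on the $(k-1)$-skeleton, extend across each $k$-simplex $\sigma$ by the observation that $g|_{\partial \sigma}$ maps a $(k-1)$-sphere into the contractible space $U_\sigma$, and hence bounds a disk there.

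To conclude I would show that both compositions are homotopic to the identity. For $f \circ g \simeq \mathrm{id}$ on $|\mathcal{N}(\mathcal{U})|$, the simplicial approximation theorem combined with the contractibility of each $U_\sigma$ allows one to homotope $f \circ g$ to the identity within stars of simplices. For $g \circ f \simeq \mathrm{id}_X$, one runs an analogous induction on $X$ indexed by the dimension of the ``carrier'' $\sigma(x) = \{ i : \varphi_i(x) > 0\}$: each $x$ lies in the contractible set $U_{\sigma(x)}$, so $g(f(x))$ can be contracted back to $x$ inside $U_{\sigma(x)}$, and local homotopies supported on preimages of open stars of simplices are patched together using a partition of unity on $X$.

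The main obstacle is precisely the direction $g \circ f \simeq \mathrm{id}_X$: unlike on the nerve side, $X$ carries no ambient linear structure, and one must assemble the local contractions of the various $U_\sigma$ into a single continuous homotopy. This is where the full strength of paracompactness and of the good-cover hypothesis (contractibility of \emph{every} finite intersection, not merely the $U_i$) enters. A cleaner packaging that bypasses the hands-on induction is via homotopy colimits: form $Y := \operatorname{hocolim}_\sigma U_\sigma$; then $Y \to X$ is a homotopy equivalence by a Dold--Segal argument using paracompactness of the cover, and $Y \to |\mathcal{N}(\mathcal{U})|$ is one because every $U_\sigma$ is contractible, so $X \simeq |\mathcal{N}(\mathcal{U})|$.
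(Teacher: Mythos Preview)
The paper does not actually prove this lemma; it simply records it as the standard nerve lemma and cites \cite[Corollary~4G.3]{Hat}. So there is no ``paper's own proof'' to compare against beyond Hatcher's treatment, which proceeds exactly via the homotopy-colimit route you mention at the end: one forms the diagram $\sigma\mapsto U_\sigma$, identifies its homotopy colimit with $X$ (using paracompactness and a partition of unity), and then observes that replacing each contractible $U_\sigma$ by a point yields $|\mathcal{N}(\mathcal{U})|$.

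Your hands-on approach is fine up through the construction of $f$ and $g$ and the verification that $f\circ g\simeq \mathrm{id}$ on the nerve (both maps carry each closed simplex into its closed star, and two such maps are linearly homotopic). The genuine weak point is exactly where you flag it: the sentence ``local homotopies \ldots\ are patched together using a partition of unity on $X$'' does not literally make sense, since $X$ has no convex structure with which to take weighted averages of homotopies. What one can do instead is pass to the mapping-cylinder/double-mapping-cylinder picture, or equivalently observe that the projection $\operatorname{hocolim}_\sigma U_\sigma \to X$ admits a section built from the partition of unity and is a fiberwise homotopy equivalence; this is precisely the step where the homotopy-colimit formulation earns its keep. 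Since you already state that cleaner packaging correctly, your proposal is sound once you drop the informal ``patch by partition of unity'' line and rely on the $\operatorname{hocolim}$ argument for the $g\circ f$ direction.
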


\section{Counting}

We are now ready to prove Theorem \ref{thm_diam}:
\begin{thmrep}{\ref{thm_diam}} For all $n \geq 3$ there exist $0<a<b$ so that
\[a\cdot d \leq \log(\log(NC^{\diam}_n(d))) \leq b \cdot d\]
for all $d\in\RR_+$ large enough.
\end{thmrep}
\begin{proof} The upper bound is direct from Young's result (Equation \eqref{eq_diamcount}), so we focus on the lower bound. 

Consider the building blocks defined by Gelander and Levit and set
\[D=\max\{\diam(V_0),\diam(V_1),\diam(A_+),\diam(A_-),\diam(B_+),\diam(B_-)\}. \]
Because all the building blocks are compact, this is a finite number. Given a finite index subgroup $H<\FF_2$ and a map $\tau:V(\Gamma_H)\to\{0,1\}$, we have
\[\diam(M(H,\tau)) \leq 2D\cdot\diam(\Gamma_H)+2D.\]
Indeed, suppose $x,y\in M(H,\tau)$. Then the number of building blocks that need to be crossed to get from $x$ to $y$ is at most $2\diam(\Gamma_H)+2$, including the building blocks containing $x$ and $y$.

Because of Theorem \ref{thm_subgrps} combined with Theorem \ref{thm_diamgraph} and Stirling's approximation, there exists a $C>0$ so that the index $N$ subgroups of $\FF_2$ for $N$ large enough produce at least 
\[C\cdot N^{CN}\]
non-isomorphic graphs of diameter $\leq \log_3(N) + o(\log(N))$. If we now define maps $\tau:V(\Gamma)\to\{0,1\}$ that assign the value $1$ to only one vertex per graph $\Gamma$, we obtain $C\cdot N^{CN}$ manifolds of diameter $d \leq 2D\log_3(N) + o(\log(N))$. Working this out, we see that this number of manifolds is at least
\[\exp(C'\cdot d \cdot \exp(C'\cdot d)),\]
for some $C'>0$. Proposition \ref{prp_noncomm} tells us that none of the resulting manifolds will be commensurable.
\end{proof}

Corollary \ref{cor_arithm} now also easily follows.

\begin{correp}{\ref{cor_arithm}} Let $n\geq 3$. We have
\[\lim_{d\to\infty}\PP_{n,d}[\text{The manifold is arithmetic}] = 0\]
\end{correp}

\begin{proof} The only thing we need to control is the number of maximal uniform arithmetic lattices of diameter $\leq d$ in $\Isom^+(\HH^n)$. Let us call this number $\mathrm{MALD}^u_{n}(d)$. By Lemma \ref{lem_diamvol} we have
\[\mathrm{MALD}^u_{n}(d) \leq \mathrm{MAL}^u_n(e^{d/C_n})\]
for some $C_n>0$ independent of $d$. As such, Theorem \ref{thm_mal} implies that for every $\varepsilon>0$ there exists a $\beta'>0$ so that
\[\mathrm{MALD}^u_{n}(d) \leq \exp(\beta'\cdot d^{1+\varepsilon}).\]
Comparing this to Theorem \ref{thm_diam} gives the result.
\end{proof}

\section{Torsion}

In this section we prove Theorem \ref{thm_torsion} and explain how a positive answer to Conjecture \ref{con_berven} would lead to a sequence of manifolds that saturates the bound in that theorem up to a multiplicative constant.

\subsection{An upper bound for torsion in homology} We will start with:
\begin{thmrep}{\ref{thm_torsion}} For every $n\geq 2$ there exists a constant $C >0$ so that 
\[\log\log\left(\card{H_i(M,\ZZ)_{\mathrm{tors}}}\right) \leq C\cdot \diam(M) \]
for all $i=0,\ldots,n$ for any closed hyperbolic $n$-manifold $M$.
\end{thmrep}

\begin{proof} Our final goal is to employ Lemma \ref{lem_tors}. In the language of 
\cite{BadGelSau}: we need to show that a closed hyperbolic manifold $M$ is homotopy equivalent to a $(D,C\cdot\diam(M))$-simplicial complex, where $C,D>0$ are constants depending only its dimension. The simplicial complex we build is the same as that used for the upper bound in Young's result (Equation \eqref{eq_diamcount}). 

Set $r=\inj(M)$ and let $S\subset M$ be a maximal set of points so that
\[\dist(s,s') \geq r/4\]
for all $s,s'\in S$. Now consider the collection \[\mathcal{U}=\{B_M(s,r/2)\}_{s\in S},\]
where $B_M(s,r/2)$ denotes the open ball in $M$ of radius $r/2$ around $s$. It follows from maximality of $S$ that these balls form an open cover. Moreover, because their radius is half the injectivity radius they are isometrically embedded $n$-dimensional hyperbolic balls. As such they are convex, which means that their intersections are convex and thus contractible. Hence, the nerve lemma (Lemma \ref{lem_nerve}) applies. This means that the homology groups we are after are those of $\mathcal{N}(\mathcal{U})$.

So we need to find bounds on the number of vertices and their degrees in $\mathcal{N}(\mathcal{U})$ in order to apply Lemma \ref{lem_tors}.

The number of vertices, or equivalently the number of points in $S$, can be bounded by
\[\card{S} \leq \frac{\vol(M)}{\vol(B_{\HH^n}(p,r/4))} \leq D\cdot \frac{\vol(M)}{(r/4)^n}, \]
where $B_{\HH^n}(p,r/4)$ is the ball of radius $r/4$ around some point $p\in\HH^n$ and $D>0$ is some constant depending only on the dimension. The second of these bounds again follows from the closed formula for the volume of a ball in $\HH^n$. Now we use Lemma \ref{lem_diaminj} and Lemma \ref{lem_diamvol}  to tell us that 
\[(r/4)^n \geq \exp(-C\diam(M)) \;\;\text{and}\;\; \vol(M) \leq \exp(C\diam(M))\]
for some $C>0$ depending only on $n$. So we obtain
\[\card{S} \leq A\cdot \exp(B\diam(M)).  \]
for some $A,B>0$ depending only on $n$.

All that remains is to show that each vertex has a bounded number of neighbors. First of all note that all the neighbors of a point $s\in S$ lie in $B_M(s,r)\subset M$. By definition of $S$, the balls of radius $r/8$ around the neighbors of $s$ are all disjoint and all lie in $B_{M}(s,9r/8)$. This means that the number of neighbors is at most
\[\frac{\vol(B_M(s,9r/8))}{\vol(B_M(s,r/8))} \leq \frac{\vol(B_{\HH^n}(s,9r/8))}{\vol(B_{\HH^n}(s,r/8))},\]
which, for $r$ small enough, is uniformly bounded in each fixed dimension.
\end{proof}

\subsection{Sharpness of the bound}

Like we said in the introduction, if Conjecture \ref{con_berven} holds then we would get a sequence of closed hyperbolic $3$-manifolds $\{M_N\}_{N\in\NN}$ such that $\diam(M_N)\to \infty$ as $N\to\infty$ and
\[ \log\log\left(\card{ H_1(M_N,\ZZ)_{\mathrm{tors}}}\right) \geq C \diam(M_N).\]
for some $C>0$ independent of $N$. Indeed, it follows from Theorem \ref{thm_diamlaplace} together with Theorem \ref{thm_tau} that for a uniform arithmetic group $\Gamma < \SL_2(\CC)$ and a sequence of congruence subgroups $\{\Gamma_N\}_N$, the manifolds $M_N = \Gamma_N \backslash \HH^3$ satisfy
\[\diam(M_N) \leq A \cdot \log(\vol(M_N)).\]
It would follow from Conjecture \ref{con_berven} that 
\[\vol(M_N)\leq B\cdot \log\left(\card{ H_1(M_N,\ZZ)_{\mathrm{tors}}}\right) \]
for all $N$ and some $B>0$ independent of $N$. Putting these two together would give the desired result.



\bibliographystyle{alpha}
\bibliography{TorsDiam.bib}

\end{document}